\documentclass[12pt]{article}
\usepackage[english]{babel}
\usepackage[latin1]{inputenc}
\usepackage[T1]{fontenc}
\usepackage{amsmath,amssymb,amsthm,xspace,enumerate,url}
\usepackage[hypertex]{hyperref}
\usepackage{svn}
\parindent7pt

\newcommand{\Ind}{
 \setbox0=\hbox{$x$}\kern\wd0\hbox to 0pt{\hss$
 \mid$\hss}\lower.9\ht0\hbox to 0pt{\hss$\smile$\hss}\kern\wd0
}

\newcommand{\Notind}{
 \setbox0=\hbox{$x$}\kern\wd0\hbox to 0pt{\mathchardef
 \nn=12854\hss$\nn$\kern1.4\wd0\hss}\hbox to 0pt{\hss$\mid$\hss}\lower.9\ht0
 \hbox to 0pt{\hss$\smile$\hss}\kern\wd0
}

\usepackage{srcltx}

\title{Strongly transitive multiple trees
}

\author{Katrin Tent
}
\date{\today}

\newtheorem{satz}{Theorem}[section]
\newtheorem{theorem}{Theorem}[section]

\newtheorem{lemma}[satz]{Lemma}
\newtheorem{proposition}[satz]{Proposition}
\newtheorem{corollary}[satz]{Corollary}
\newtheorem{definition}[satz]{Definition}
\newtheorem{remark}[satz]{Remark}

\newcommand{\nc}{\newcommand}
\nc{\sa}{semialgebraic\xspace}
\nc{\el}{elementary\xspace}
\nc{\low}{lower \el}
\nc{\inv}[1]{\frac{1}{#1}}
\nc{\G}{\Gamma}
\nc{\Np}{\N_{\scriptscriptstyle >0}}
\nc{\Z}{\mathbb{Z}}
\nc{\Q}{\mathbb{Q}}
\nc{\N}{\mathbb{N}}

\nc{\Rp}{\R_{\scriptscriptstyle >0}}
\nc{\C}{\mathbb{C}}
\nc{\T}{\mathbb{T}}
\nc{\F}{\ensuremath{\mathcal{F}}\xspace}
\nc{\K}{\mathcal{K}}
\nc{\A}{\mathcal{A}}
\nc{\B}{\mathcal{B}}

\nc{\U}{\mathbb{U}}
\nc{\fps}{free pseudospace\xspace}
\nc{\fpsn}{free pseudospace of dimension $n$\xspace}
\nc{\fpsk}{free pseudospace of dimension $n-1$\xspace}

\nc{\E}{\mathbb{E}}
\nc{\Epsilon}{{\Large $\epsilon$}} 
\nc{\ap}{approximable\xspace}
\nc{\e}{\mathrm{e}}
\nc{\ii}{\,\mathrm{i}}
\nc{\Es}{\E\setminus\R_{\scriptscriptstyle\leq0}}

\DeclareMathOperator{\Aut}{Aut}

\DeclareMathOperator{\dist}{dist}

\begin{document}

\maketitle
\begin{abstract}
We give an amalgamation construction of free multiple trees with a strongly transitive automorphism group. The construction shows
that any partial codistance function on a tuple of finite trees can
be extended to yield multiple trees.

\end{abstract}
\section{Introduction}

Multiple trees are a generalization of twin trees and buildings. While twin buildings arise from certain Kac-Moody groups, multiple trees were introduced by Tits and Ronan in order to deal with several valuations at the same time.

Since multiple trees are 
more rigid than twin trees, it is natural to ask whether as in the
case of spherical buildings of rank at least three a complete classification 
might be possible at least under the assumption that the automorphism group be strongly transitive. The construction given here shows that this is not the 
case. Furthermore, any codistance function on a tuple of finite trees can
be extended to yield multiple trees of infinite valency with a BN-pair.

 Free constructions of twin trees were given in \cite{Tfree}. However that construction started from generalized polygons and did not yield 
multiple trees: for more than two trees, the definition of multiple trees 
entails a certain regularity between sets of pairwise opposite vertices
which cannot hold in generalized $n$-gons for $n>6$. For this reason, the 
construction given here is completely different from the one for twin
trees given in  \cite{Tfree}. M\"uhlherr and Struyve informed me that they
have a construction for free multiple trees similar to the free construction
of polygons. However, their construction does not yield strong transitivity.

I thank Max Horn for pointing out an error in an earlier version of the paper.
\section{Construction}\label{sec:construction}

Recall that
 given an infinite tree $T$ without end-vertices, a codistance on $T$ is a mapping $d^*$ from $T$
 to the set $\N$ of nonnegative integers, such that, if $d^*(v)=n$ and $v'\sim v$ in $T$, then $d^*(v')\in\{n-1,n+1\}$. Moreover, if $n > 0$ then $d^*(v') = n+1$ for a unique $v'\sim v$ where we write $x\sim y$ if $x$ is a neighbour of $y$.
Given a (possibly infinite) family $\{T_i\}_{i\in I}$ of  trees, a multiple tree over $\{T_i\}_{i\in I}$ is defined by a codistance function $d\colon \prod_{i\in I}T_i\rightarrow\N$ such that, for any choice of  $k\in I$ and any $\bar a = (a_i)_{i\in I}\in\prod_{i\in I}T_i$, the function $d^*_{a;k}$ induced by $d$ on the graph $\{(x_h)_{h\in I} |$ $x_h = a_h$ for $h\neq k\}\cong T_k$ is a codistance on $T_k$. It will also be convenient to denote by $d^*_{\bar a;i,j}$ 
the codistance induced on the graph $\{(x_h)_{h\in I} |$ $x_h = a_h$ for $h\neq i,j\}\cong T_i\times T_j$

\begin{remark}\label{r:neighbour}\upshape
Suppose that $d^*(x_1,\ldots ,x_n)=k>0$ and that $y_i\sim x_i,y_j\sim x_j$ 
are such that $d^*_{\bar x;i,j}(y_i,x_j)=k+1=
d_{\bar x;i,j}^*(x_i,y_j)$. Then it follows easily that
we have $d_{\bar x;i,j}^*(y_i,y_j)=k+2$.
\end{remark}

\begin{definition}
Let $\K_n$ be the class of $n$-tuples of nonempty finite trees \[((A_1,\ldots A_n), d)\] with a codistance function $d:\prod_{i=1,\ldots n}A_i\to\N$ such that the following holds
\begin{enumerate}
\item there are $x_i\in A_i,i=1,\ldots, n$ with   $d^*(x_1,\ldots x_n)=0$;
\item if  $d^*(x_1,\ldots x_n)=k$, then for each $i=1,\ldots, n$
and any $y\sim x_i$ we have
 $d_{\bar x;i}^*(y)\in\{k+1,k-1\}$.
 
\item if  $d^*(x_1,\ldots x_n)=k>0$, then for each $i=1,\ldots, n$
there is at most one $y\sim x_i$ with
 $d_{\bar x;i}^*(y)=k+1$.
\item if $d^*(x_1,\ldots ,x_n)=k>0$ and  $y_i\sim x_i,y_j\sim x_j$ 
are such that $d^*_{\bar x;i,j}(y_i,x_j)=k+1=
d_{\bar x;i,j}^*(x_i,y_j)$, then $d_{\bar x;i,j}^*(y_i,y_j)=k+2$.
\end{enumerate}

\end{definition}

\begin{definition}
Let $\A=(\bar A,d^*)$ be in $\K_n$. We call $(x_1,\ldots x_n),(y_1,\ldots,  y_n)$ in $A$ geodesic if \[d^*(y_1,\ldots, y_n)=d^*(x_1,\ldots x_n)-D\]
where $D=\sum_{j\leq n}\dist(x_j,y_j)$  and $\dist$ denotes the graph theoretic
distance.
\end{definition}

\begin{lemma}\label{l:geodesic}
Let $\A=(\bar A,d)$ be in $\K_n$.
If  $(x_1,\ldots x_n)$ and $(y_1,\ldots, y_n)$ are geodesic, then  we have
\[d^*(y_1,\ldots, y_n)
 \leq  d^*(a_1,\ldots, a_n)
 \leq d^*(x_1,\ldots x_n)\]
if 
all $a_j\in A_j,j=1,\ldots n$ 
are on the geodesic from $y_j$ to $x_j$. If $d^*(y_1,\ldots, y_n)>0$ and $a_j\sim y_j$,
then \[d_{\bar y,j}^*(y_j)
 \leq  d_{\bar y,j}^*(a_1j)
 \leq d_{\bar y,j}^*(x_j)\]
if and only if
$a_j$ is on the geodesic from $y_j$ to $x_j$.
\end{lemma}

\begin{proof}
The first part follows immediately from the definition, the second part 
follows from Axiom 3.
\end{proof}

Let $\A=(\bar A,d)$ be in $\K_n$. We say that $d^*$ is \emph{locally maximal} in $(x_1,\ldots x_n)$ if there is no $y\sim x_i$ for any $i=1,\ldots n$ such that
\[d_{\bar x;i}^*(y)>d_{\bar x;i}^*(x_i).\]

\begin{definition}\label{d:good} Let $\A=(\bar A,d^*)$ be in $\K_n$. The following $1$-point extensions are called elementary good extensions of $\A$:
\begin{enumerate}
\item add a vertex $y$ to $A_k$ for some $1\leq k\leq n$ with $y\sim x_k\in A_k$ and for any $x_i\in A_i,i\neq k$ put
\[d_{\bar x;k}^*(y)=|d_{\bar x;k}^*(x_k)-1|.\]

\item if $d^*(x_1,\ldots, x_n)$ is locally maximal in $A$
 add a vertex $y$ to $A_k$ with $y\sim x_k$ and extend $d^*$ to the extension  as follows:

If $(y_1,\ldots, y_{k-1},x_k,y_{k+1},\ldots, y_n)$ is geodesic with $(x_1,\ldots, x_n)$, then

\[d_{\bar y;k}^*(y)=d_{\bar x;k}^*(x_k)+1.\]

and otherwise
\[d_{\bar y;k}^*(y)=|d_{\bar x;k}^*(x_k)-1|.\]

\end{enumerate}

For $A,B\in\K_n$ we say that $A$ is good in  $B$ if $B$ arises from $A$ by a finite
sequence of elementary good extensions.

\end{definition}

\begin{lemma}\label{l:goodext}
Suppose that $(A,d^*)\in\K_n$ and $(A\cup\{y\},d^*)$ is an elementary good extension.
Then $(A\cup\{y\},d^*)\in\K_n$.
\end{lemma}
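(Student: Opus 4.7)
The plan is to verify the four conditions defining $\K_n$ for the extended structure $(A\cup\{y\},d^*)$ in turn, splitting on the type of elementary good extension and on the position of $y$ within each tuple under consideration. Condition~(1) is inherited from $A$. Conditions~(2) and~(3) reduce to checks at tuples containing $y$ at coordinate $k$, together with tuples in $A$ for which $y$ becomes a newly available neighbour in coordinate $k$. For Condition~(2) the main point is the elementary fact that the map $a\mapsto|a-1|$ sends $\pm1$-differences on $\N$ to $\pm1$-differences, so the extension formula combined with Condition~(2) for $A$ preserves the neighbour relation; the coordinate-$k$ neighbour of $y$ is $x_k$ and is handled directly by the formula. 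For Condition~(3) I would show that $y$ is an ``up'' neighbour from $x_k$ only where this is harmless: in the type~1 formula $y$ is ``up'' only at tuples with $d^*_{\bar z;k}(x_k)=0$, where Condition~(3) is vacuous, and in type~2 the locally maximal hypothesis ensures $x_k$ has no competing ``up'' direction at the base tuple.

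For Condition~(4), given $\bar x'$ in the extension with $d^*(\bar x')=m$ and neighbour replacements $y_i\sim x'_i$, $y_j\sim x'_j$ ($i\neq j$) each raising $d^*$ to $m+1$, I want the diagonal to be $m+2$. I split cases according to whether $\bar x'$, $y_i$, or $y_j$ is (or contains) the new vertex $y$. If none do, Condition~(4) for $A$ applies directly. If $\bar x'\in A$ but one of $y_i,y_j$ equals $y$ at coordinate $k$, I apply Condition~(4) for $A$ to the companion tuple with $x_k$ replacing $y$, and then translate the conclusion through the extension formula. If $\bar x'$ contains $y$ at coordinate $k$, I rewrite $\bar x'$ with $x_k$ at coordinate $k$, invoke Condition~(4) for $A$ there, and translate back.

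The hardest piece is Condition~(4) for type~2 extensions, because the defining formula branches on whether the relevant companion tuple is geodesic with the locally maximal base $(x_1,\ldots,x_n)$. Here Lemma~\ref{l:geodesic} is the key tool: it pins down exactly which tuples satisfy the geodesic condition, and combined with local maximality it ensures that the four corners of the parallelogram select consistent branches of the extension formula, so that the diagonal value indeed comes out to $m+2$.
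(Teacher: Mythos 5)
Your overall decomposition (split on the type of extension and on where the new vertex $y$ sits in the tuple being tested) matches the paper's, and your handling of Conditions 1 and 2 and of type 1 extensions agrees with the paper, which dismisses these as automatic. The genuine gap is in Condition 3 for type 2 extensions. You assert that $y$ is an ``up'' neighbour from $x_k$ only where this is harmless because ``the locally maximal hypothesis ensures $x_k$ has no competing up direction at the base tuple.'' But in a type 2 extension $y$ is an up-neighbour of $x_k$ at \emph{every} tuple $\bar a$ with $a_k=x_k$ that is geodesic with the base $\bar x$, not only at $\bar x$ itself. At such a tuple local maximality says nothing directly; the paper has to rule out a second up-neighbour $x\sim x_k$ by taking a coordinate $j$ with $a_j\neq x_j$, applying Condition 4 of $A$ to $x$ and the neighbour $b$ of $a_j$ on the geodesic toward $x_j$, and propagating this back to contradict the local maximality of $d^*(\bar x)$. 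This step uses Condition 4 of $A$ in an essential way and is not covered by your sketch.

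More seriously, your framing of Condition 3 (``$y$ is an up neighbour from $x_k$ only where this is harmless'') omits the case that occupies the bulk of the paper's proof: tuples with $a_k=y$ and $i\neq k$, i.e.\ uniqueness of up-neighbours in the \emph{other} coordinates for tuples passing through the new vertex. Since the codistance from such tuples is only defined via the extension formula, one must suppose two neighbours $b_1,b_2\sim a_i$ both raise the codistance to the tuple containing $y$ and derive a contradiction. The paper splits on whether $d^*_{\bar a;k}(y)$ is greater or less than $d^*_{\bar a;k}(x_k)$; the second sub-case is genuinely delicate, showing via Lemma~\ref{l:geodesic} that the hypothetical configuration forces $(a_1,\ldots,a_{k-1},x_k,a_{k+1},\ldots,a_n)$ to be geodesic with $\bar x$, contradicting $d^*_{\bar a;k}(y)<d^*_{\bar a;k}(x_k)$. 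Without this case the lemma is not proved. Your outline of Condition 4 is consistent with the paper's argument (case split on where $y$ appears, Lemma~\ref{l:geodesic} to control which corners of the square are geodesic with $\bar x$), though it too would need to be carried out at the same level of detail.
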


\begin{proof}
If the extension is of type 1. all conditions continue to hold automatically.
We have to show that the conditions hold for extensions of type 2.
Conditions 1. and 2. are still clear.

For 3. we have to
show that for any $(a_1,\ldots a_n)\in A\cup\{y\}$ with $d^*(a_1,\ldots a_n)>0$ and $1\leq i\leq n$ there
is at most one $x\sim a_i$ with 
$d_{\bar a;i}^*(x)>d_{\bar a;i}^*(a_i)$. Suppose
that $d^*(x_1,\ldots, x_n)$ is locally maximal in $A$ and a vertex 
$y$ was attached to $A_k$ with $y\sim x_k$. If $a_k\neq x_k,y$, then
all vertices are inside $A$ and since $d^*$ was not changed on $A$, the claim
remains true.

So suppose $a_k=x_k$. 
If $\bar a=\bar x$, then the claim follows from the local maximality.
Hence we may assume that for some $1\leq j\leq n$ we have $a_j\neq x_j$
(and clearly $j\neq k$).
Since there is a unique new vertex $y$, we only
have to consider the case  $i=k$ and $d_{\bar a;k}^*(y)>d_{\bar a;k}^*(a_i)$. 
Then 
$(a_1,\ldots a_n)$ is geodesic with 
$(x_1,\ldots, x_n)$. Thus  for $b\sim a_j$ 
in  the interval $[a_j,x_j]$ we have \[d_{\bar a;i}^*(b)=d^*(a_i)+1.\]
Suppose that for some further $x\sim x_k=a_k,x\neq y$ we have \[d_{\bar a;k}^*(x)=d_{\bar a;k}^*(a_k)+1.\]

Since $A\in\K_n$ this implies
\[d_{\bar a;k,j}^*(x,b)=d_{\bar a;k,j}^*(x_k,a_j)+2\]

and hence
\[d_{\bar a;k,j}^*(x_k,b)\geq d_{\bar a;k,j}^*(x_k,a_j)+1\]
contradicting the locally maximal choice of $d^*(x_1,\ldots, x_n)$ (remember that $a_k=x_k$).

Finally consider the case $a_k=y$.
For  $i=k$ there is nothing to show since $y$ has a unique neighbour,
so suppose $i\neq k$.
If \[d_{\bar a;k}^*(y)>d_{\bar a;k}^*(x_k)\]
then $(a_1,\ldots,a_{k-1},x_k,a_{k+1},\ldots, a_n)$ is geodesic with
$(x_1,\ldots, x_n)$. 

If $x\sim a_i$  is such that
\[d_{\bar a;i}^*(x)=d_{\bar a;i,k}^*(x,y)>d_{\bar a;i}^*(a_i)=d_{\bar a;k}^*(y)>d_{\bar a;k}^*(x_k)\]

then \[d_{\bar a;i,k}^*(x,y)>d_{\bar a;i,k}^*(a_i,y)=d_{\bar a;k}^*(y)>d_{\bar a;i,k}^*(a_i,x_k)\]

and therefore
\[d_{\bar a;i,k}^*(x,y)>d_{\bar a;i,k}^*(x,x_k).\]

Hence by Lemma~\ref{l:geodesic} $x$ is the unique neighbour of $a_i$ closer to $x_i$.

Next suppose \[d_{\bar a;k}^*(y)<
d_{\bar a;k}^*(x_k)\]
and that there are $b_1,b_2\sim a_i$ with \[d_{\bar a;i}^*(b_1)=d_{\bar a;i}^*(b_2)>
d_{\bar a;i}^*(a_i).\]

If
\[d_{\bar a;i,k}^*(b_1,x_k)=d_{\bar a;i,k}^*(b_2,x_k)<
d_{\bar a;i,k}^*(a_i,x_k)=d_{\bar a;k}^*(x_k)\]
then 
\[d_{\bar a;i}^*(y)=d_{\bar a;i,k}^*(b_1,x_k)=d_{\bar a;i,k}^*(b_2,x_k)<
d_{\bar a;i,k}^*(b_1,y)=d_{\bar a;i,k}^*(b_2,y).\]

Then $(a_1.\ldots a_{i-1},b_s,a_{i+1},\ldots, a_{k-1},x_k,a_{k+1},\ldots ,a_n)$
is geodesic with $(x_1,\ldots, x_n)$ for $s=1,2$. But this clearly implies
that also \[(a_1,\ldots, a_{i-1},a_i,a_{i+1},\ldots, a_{k-1},x_k,a_{k+1},\ldots ,a_n)\] is geodesic with $(x_1,\ldots, x_n)$, contradicting
\[d_{\bar a;k}^*(y)< d_{\bar a;k}^*(x_k)\]

As $A$ is in $\K_n$ we may therefore assume
\[ d_{\bar a;i,k}^*(b_1,x_k)
< d_{\bar a;i,k}^*(a_i,x_k)
<  d_{\bar a;i,k}^*(b_2,x_k).\]

Then
\[ d_{\bar a;i,k}^*(b_1,x_k)
= d_{\bar a;i,k}^*(a_i,y)< d_{\bar a;i,k}^*(b_1,y)= 
d_{\bar a;i,k}^*(b_2,y)< d_{\bar a;i,k}^*(b_2,x_k).\]

This implies that $(a_1.\ldots a_{i-1},b_2,a_{i+1},\ldots, a_{k-1},x_k,a_{k+1},\ldots ,a_n)$
is geodesic with $(x_1,\ldots, x_n)$ from which we again conclude that
also
\[(a_1.\ldots a_{i-1},a_i,a_{i+1},\ldots, a_{k-1},x_k,a_{k+1},\ldots ,a_n)\]
is geodesic with $(x_1,\ldots, x_n)$, a contradiction.

It remains to prove that Condition 4. holds:
if $d^*(a_1,\ldots ,a_n)=m>0$ and  $y_i\sim a_i,y_j\sim a_j$ 
are such that $d^*_{\bar a;i,j}(y_i,a_j)=m+1=
d_{\bar a;i,j}^*(a_i,y_j)$, then $d_{\bar a;i,j}^*(y_i,y_j)=m+2$.

Since $A\in\K_n$ we only have to check the situation when $a_k=y$ or when
$a_k=x_k$ and $i=k$. 

First assume $a_k=x_k$ and $i=k$. Assume  $y\sim x_k,y_j\sim a_j$ 
are such that
$d^*_{\bar a;k,j}(y,a_j)=m+1=d_{\bar a;k,j}^*(x_k,y_j)$.  Then
$(a_1,\ldots a_n)$
is geodesic with $(x_1,\ldots x_n)$. Lemma~\ref{l:geodesic} implies
that $y_j\sim a_j$ lies between $a_j$ and $x_j$ and hence
$(a_1,\ldots, a_{j-1},y_j,a_{j+1},\ldots, a_n)$
is geodesic with $(x_1,\ldots x_n)$. Hence $d_{\bar a;k,j}^*(y,y_j)=m+2$.

Next we assume $a_k=y$ and $i=k$. Assume  $y_k=x_k\sim y,y_j\sim a_j$ 
are such that
$d^*_{\bar a;k,j}(x_k,a_j)=m+1=d_{\bar a;k,j}^*(y,y_j)$. 
If $d^*_{\bar a;k,j}(x_k,y_j)<d_{\bar a;k,j}^*(y,y_j)$, then
$(a_1,\ldots, a_{k-1},x_k,a_{k+1},\ldots, y_j,a_{j+1},\ldots a_n)$
is geodesic with $(x_1,\ldots x_n)$. Now $d^*_{\bar a;k,j}(x_k,a_j)=m+1$
implies that also $(a_1,\ldots, a_{k-1},x_k,a_{k+1},\ldots, a_j,a_{j+1},\ldots a_n)$ is geodesic with $(x_1,\ldots x_n)$, and hence $d^*_{\bar a;k,j}(x_k,a_j)<
d^*_{\bar a;k,j}(y,a_j)$,
 a contradiction.

Now assume $a_k=y$ and $i,j\neq k$. Let $y_i\sim a_i,y_j\sim a_j$ 
be such that
$d^*_{\bar a;i,j}(y_i,a_j)=m+1=d_{\bar a;i,j}^*(a_i,y_j)$. 
If $d^*_{\bar a;i,j}(y_i,y_j)<d_{\bar a;i,j}^*(a_i,y_j)$, then
$(a_1,\ldots, y_j,a_{j+1},\ldots a_n)$ and hence $(a_1,\ldots, a_n)$
are geodesic with $(x_1,\ldots x_n)$ (while
$(a_1,\ldots, y_i,a_{i+1},\ldots a_n)$ is not geodesic with $(x_1,\ldots x_n)$.) Thus for the unique $b_i\sim a_i$
between $a_i$ and $x_i$ we have $d^*_{\bar a;i,j}(b_i,y_j)=m+2$.
But since $A\cup\{y\}$ satisfies Condition 2. we must have $y_i=b_i$, a contradiction.
\end{proof}

\begin{lemma}\label{l:goodemb}
Suppose that $(A,d^*),(A\cup\{y\},d^*)\in\K_n$. Then $A\cup\{y\}$ is an elementary good extension of $A$.
\end{lemma}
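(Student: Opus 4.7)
The new vertex $y$ has a unique neighbour $x_k\in A_k$. For any tuple $\bar y=(y_i)_{i\neq k}$ with $y_i\in A_i$, Condition~2 of the definition of $\K_n$ forces $d^*_{\bar y;k}(y)-d^*_{\bar y;k}(x_k)=\pm1$. My plan is to dichotomise according to whether $d^*_{\bar y;k}(y)=d^*_{\bar y;k}(x_k)+1$ ever occurs at a tuple with $d^*_{\bar y;k}(x_k)>0$; call such a $\bar y$ an \emph{ascent}.

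In the no-ascent case I get $d^*_{\bar y;k}(y)=|d^*_{\bar y;k}(x_k)-1|$ for every $\bar y$ (descent at positive values, forced to $+1$ at zero), matching Definition~\ref{d:good}(1). In the ascent case, I pick by finiteness of $A$ a reference tuple $\bar x$ with $x_k$ at position $k$, $d^*(\bar x)>0$, ascent at $\bar x$, and $d^*(\bar x)$ maximal with these properties. I then need to show $\bar x$ is locally maximal in $A$: for a hypothetical neighbour $x'_i\sim x_i$ at $i\neq k$ with $d^*(\bar x[i\leftarrow x'_i])=d^*(\bar x)+1$, the simultaneous $+1$-steps at coordinates $i$ and $k$ satisfy the hypothesis of Condition~4 (Remark~\ref{r:neighbour}) and force $d^*(\bar x[i\leftarrow x'_i][k\leftarrow y])=d^*(\bar x)+2$, so $\bar x[i\leftarrow x'_i]$ is itself an ascent of strictly larger $d^*$-value, contradicting the maximal choice. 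For $i=k$, Condition~3 in $A\cup\{y\}$ at $\bar x$ (available because $d^*(\bar x)>0$) singles out $y$ as the unique $+1$-neighbour of $x_k$, ruling out any $+1$-neighbour of $x_k$ in $A$.

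It remains to verify that the values of $d^*$ on $A\cup\{y\}$ match Definition~\ref{d:good}(2) with this $\bar x$ as reference. The direction ``$(y_1,\ldots,x_k,\ldots,y_n)$ geodesic with $\bar x$ $\Rightarrow$ ascent at $\bar y$'' I plan to prove by induction on $D=\sum_{i\neq k}\dist(x_i,y_i)$: the base $D=0$ is the choice of $\bar x$, and in the inductive step I replace some $y_i\neq x_i$ by its neighbour $y'_i$ one step closer to $x_i$, apply the hypothesis to the still-geodesic (by Lemma~\ref{l:geodesic}) tuple $\bar y'$, and pin down $d^*_{\bar y;k}(y)$ as the unique common value of the two Condition~2 chains coming from $\bar y$ and from $\bar y'$ with $y$ at position $k$. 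The converse is the main obstacle. My plan is to first prove an \emph{ascent propagation lemma} via Condition~4 (when the coordinate-$\neq k$ move raises $d^*$) and by intersecting Condition~2 chains (when the move lowers $d^*$): if $\bar z$ is an ascent then every neighbour of $\bar z$ in a coordinate $\neq k$ is again an ascent. Since the slice $\{z_k=x_k\}$ is connected and contains $\bar x$, every slice tuple is then an ascent, and a second induction on distance from $\bar x$, using Condition~4 at two-coordinate moves to exclude non-linear jumps, should show that $d^*$ is linear from $\bar x$ throughout the slice, making every slice tuple geodesic with $\bar x$ as required.
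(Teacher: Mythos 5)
Your proposal is correct in outline, and every sub-lemma you announce does go through, but the route is genuinely different from (and substantially more detailed than) the paper's. The paper's proof makes the same opening dichotomy and then simply picks an ascent ``which we may assume to be locally maximal'' and argues in one line that any other ascent $\bar z$ must be geodesic with it, because a non-geodesic path would produce a local minimum of the codistance along a single coordinate tree, contradicting Axiom~3; it does not justify the existence of the locally maximal ascent, does not prove the converse direction (geodesic implies ascent), and its one-line argument as written only addresses deviations in a single coordinate and says nothing about passage through codistance~$0$, where Axiom~3 gives no information. You fill all three gaps: the maximality-implies-local-maximality step via Condition~4 (for $i\neq k$) and Condition~3 in $A\cup\{y\}$ (for $i=k$) is exactly right; the chain-intersection argument for ``geodesic $\Rightarrow$ ascent'' is correct; and your ascent-propagation lemma is true in both cases (Condition~4 when the move raises $d^*$, chain intersection when it lowers it), which yields the stronger structural fact -- not visible in the paper -- that \emph{every} tuple in the slice $\{z_k=x_k\}$ is a $+1$-tuple and is geodesic with $\bar x$, so the ``otherwise'' clause of Definition~\ref{d:good}(2) is never actually exercised. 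Two details you should make explicit when writing this up: first, ``ascent'' in the propagation lemma must mean ``$+1$-tuple'', including the forced $+1$ at tuples of codistance $0$, since propagation must pass through such tuples (and does, via Condition~4); second, in the final linearity induction the case of two increasing same-coordinate neighbours of a codistance-$0$ slice tuple cannot be excluded by Condition~3 in the $x_k$-slice -- you must apply Condition~3 to the corresponding $y$-slice tuple, whose codistance is everywhere $\geq 1$ because every slice tuple is a $+1$-tuple. With those two points added, your argument is complete and, in my view, more robust than the one in the paper.
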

\begin{proof}
Let $y\sim x_k$ for some (unique) $x_k\in A_k$. If 
 $d_{\bar x;k}^*(y)<d_{\bar x;k}^*(x_k)$ for all $(x_1,\ldots, x_n)\in A$, then $A\cup\{y\}$ is an extension of type 1.
So suppose for some  $(x_1,\ldots, x_n)\in A$ 
we have $d_{\bar x;k}^*(y)>d_{\bar x;k}^*(x_k)$, which we may assume to be locally  maximal. Let $(z_1,\ldots, z_n)\in A$ with $z_k=x_k$  and $d_{\bar z;k}^*(y)>d_{\bar z;k}^*(x_k)$. We have to show that $\bar z$ and $\bar x$ are geodesic.
Otherwise for some $j$ on the path $\gamma=(x_j=y_0,\ldots, y_m=z_j)$ there is some $y_s$ such that 
\[d_{\bar x;j}^*(y_s)<d_{\bar x;j}^*(y_{s-1},d_{\bar x;j}^*(y_{s+1}\]
contradicting Axiom 3.
\end{proof}

\begin{corollary}\label{c:goodemb}
Suppose that $(A,d^*),(B,d^*)\in\K_n$ with $A\subseteq B$. Then $A$ is good in $B$.\qed
\end{corollary}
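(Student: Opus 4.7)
The plan is to prove the corollary by induction on $N := \sum_{i=1}^{n} |B_i \setminus A_i|$, iterating Lemma~\ref{l:goodemb}. When $N = 0$ we have $A = B$ and the empty sequence of elementary good extensions suffices, so assume $N > 0$. I aim to find a vertex $y \in B \setminus A$ such that $A \cup \{y\}$ is again a tuple of finite trees contained in $B$ and still belongs to $\K_n$; then Lemma~\ref{l:goodemb} will turn the single-vertex step into an elementary good extension and the induction hypothesis will finish the rest.

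To produce such a $y$, pick a coordinate $k$ with $A_k \subsetneq B_k$. Since $A_k$ is a finite subtree of the tree $B_k$ and $B_k \setminus A_k$ is nonempty, a shortest path in $B_k$ starting from some vertex of $A_k$ and ending at a vertex of $B_k \setminus A_k$ must exit $A_k$ at a vertex $y$ whose only neighbor in $A_k$ is its predecessor on that path; hence $A_k \cup \{y\}$ is again a subtree of $B_k$ and $A \cup \{y\}$ is a tuple of finite trees. The four conditions defining $\K_n$ are purely local, referring only to a vertex, one or two neighbors, and the corresponding codistances, so they descend from $(B, d^*) \in \K_n$ to $(A \cup \{y\}, d^*)$.

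With $(A \cup \{y\}, d^*) \in \K_n$ established, Lemma~\ref{l:goodemb} gives that $A \cup \{y\}$ is an elementary good extension of $A$. Since $|B \setminus (A \cup \{y\})| = N - 1$, the induction hypothesis applied to $A \cup \{y\} \subseteq B$ yields a finite sequence of elementary good extensions producing $B$ from $A \cup \{y\}$; prepending the single extension $A \to A \cup \{y\}$ gives the desired sequence from $A$ to $B$. I expect no genuine obstacle: the substantive content has already been absorbed into Lemma~\ref{l:goodemb}, and the corollary is essentially a formal induction on top of it, resting only on the elementary observation that a finite subtree of a tree can be enlarged one boundary vertex at a time.
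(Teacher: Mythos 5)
Your proof is correct and is exactly the argument the paper leaves implicit behind the \qed: grow $A$ inside $B$ one boundary vertex at a time, check the intermediate structure is in $\K_n$, and invoke Lemma~\ref{l:goodemb} at each step. (One small precision: Condition~1 of the definition of $\K_n$ is existential and does not ``descend'' from $B$ to a substructure; it holds for $A\cup\{y\}$ because it already holds for $A\in\K_n$ -- but this does not affect the argument.)
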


\begin{lemma} The class $\K_n$ has the amalgamation property.
\end{lemma}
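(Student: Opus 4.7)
The plan is to induct on $|B \setminus A|$, where $\A \subseteq \B$ and $\A \subseteq \mathcal{C}$ are the two structures to amalgamate over $\A$. The base case $\B = \A$ is handled by taking $\mathcal{D} = \mathcal{C}$. For the inductive step, use Corollary~\ref{c:goodemb} to write $\B = \B_0 \cup \{y\}$ as a single elementary good extension of some $\B_0 \supseteq \A$, apply the inductive hypothesis to $\B_0$ and $\mathcal{C}$ to obtain $\mathcal{D}_0 \in \K_n$ amalgamating them over $\A$, and extend $\mathcal{D}_0$ by (a copy of) $y$ to a $\mathcal{D} \in \K_n$ containing $\B$.

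Let $y \sim x_k$ with $x_k \in B_{0,k} \subseteq D_{0,k}$. If $\B_0 \to \B$ is of type~1, simply define $\mathcal{D} = \mathcal{D}_0 \cup \{y\}$ as the corresponding type~1 elementary good extension of $\mathcal{D}_0$; by Lemma~\ref{l:goodext}, $\mathcal{D} \in \K_n$, and the rule $d^*_{\bar d;k,\mathcal{D}}(y) = |d^*_{\bar d;k,\mathcal{D}_0}(x_k)-1|$ clearly restricts to the rule defining $\B$ on tuples in $\B_0$.

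If the extension is of type~2 with witness a locally maximal $\bar z \in \B_0$, iteratively follow the unique $+1$-neighbors of $\bar z$ in $\mathcal{D}_0$ (at most one per coordinate by Condition~3, compatible through Condition~4) to a unique locally maximal peak $\bar z^* \in \mathcal{D}_0$ geodesic with $\bar z$. A cube-type argument using Condition~2 (each single-coordinate move changes $d^*$ by exactly $\pm 1$) shows that $z^*_k \neq z_k$ can occur only when $\bar z$ already possesses a $+1$-neighbor $v \sim z_k$ in coordinate $k$ inside $\mathcal{D}_0$. Accordingly, if $z^*_k = z_k$ let $\mathcal{D} = \mathcal{D}_0 \cup \{y\}$ be the type~2 elementary good extension of $\mathcal{D}_0$ witnessed by $\bar z^*$ at coordinate $k$ (valid by Lemma~\ref{l:goodext}); and if $z^*_k \neq z_k$, set $\mathcal{D} = \mathcal{D}_0$ and embed $\B$ by sending $y$ to the unique such $v$ (unique by Condition~3).

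The main obstacle is the consistency verification in the type~2 cases: for each $\bar b \in \B_0$ one must show the codistance $d^*_{\bar b;k,\mathcal{D}}(y)$ prescribed by the $\mathcal{D}_0$-extension agrees with the codistance $d^*_{\bar b;k,\B}(y)$ prescribed by $\B$. Both rules split according to whether $\bar b$ is geodesic with $\bar z$ (in $\B_0$), respectively with $\bar z^*$ (in $\mathcal{D}_0$), so by Lemma~\ref{l:geodesic} the task reduces to checking that $z_j$ lies on the geodesic from $b_j$ to $z_j^*$ in each amalgamated tree $D_{0,j} = B_{0,j} \cup_{A_j} C_j$. This in turn follows from the local maximality of $\bar z$ in $\B_0$: the $+1$-chain from $z_j$ cannot originate inside $\B_0$, so it must enter $\mathcal{C}$ precisely through the vertex $z_j \in A_j$, placing $z_j$ on every path from a vertex of $B_{0,j}$ to $z_j^*$.
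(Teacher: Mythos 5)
Your proof takes a genuinely different route from the paper's. The paper reduces amalgamation to the symmetric ``elementary square'': both sides are one-point good extensions $A\cup\{b\}$, $A\cup\{c\}$ of the common base, and the argument is a case analysis (same or different coordinate tree, type 1 versus type 2, same or different locally maximal witness), in each case writing down $d^*(b,c)$ explicitly and invoking Lemma~\ref{l:goodext} to see that $A\cup\{b,c\}$ is a good extension of $C$. You instead peel points off $B$ only, so in your inductive step the second factor is the full partial amalgam $\mathcal D_0$ rather than a one-point extension. This buys a much shorter case analysis, but it creates the one difficulty the paper's symmetric reduction avoids: the locally maximal witness $\bar z\in\B_0$ of a type-2 extension need not remain locally maximal in $\mathcal D_0$. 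Your treatment of that difficulty via the peak $\bar z^*$ is sound: local maximality of $\bar z$ in $\B_0$ forces the $+1$-chain to leave $B_{0,j}$ at its first step, so $z_j$ separates $B_{0,j}$ from $z_j^*$ in the tree $D_{0,j}$, and geodesy with $\bar z$ and with $\bar z^*$ coincide for tuples from $\B_0$, which is what makes the type-2 extension of $\mathcal D_0$ at $\bar z^*$ restrict to $\B$.

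There is, however, a gap in the subcase $z_k^*\neq z_k$, where you set $\mathcal D=\mathcal D_0$ and send $y$ to the existing $+1$-neighbour $v\sim z_k$. The codistances of $v$ to tuples of $\B_0$ are not ``prescribed by a rule'' that you get to choose---they are whatever the inductively obtained amalgam $\mathcal D_0$ happens to assign---so your consistency argument (both sides split according to geodesy with $\bar z$, respectively $\bar z^*$) does not apply to this case as stated. To close it you must show that $B_0\cup\{v\}$, with the codistance induced from $\mathcal D_0$, is isomorphic to $B_0\cup\{y\}$ over $B_0$: by Lemma~\ref{l:goodemb} it is an elementary good extension of $B_0$, necessarily of type 2 since $d^*$ increases at $\bar z$, and its locally maximal witness must be $\bar z$ itself (any tuple at which $v$ increases the codistance is geodesic with the witness, and $\bar z$, being locally maximal in $\B_0$, cannot be geodesic with a distinct tuple of $\B_0$ of larger codistance); since the type-2 rule at a fixed witness is deterministic, the two extensions then agree. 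This is fillable, but it is a genuinely missing step---it is precisely the analogue of the paper's ``identify $b$ and $c$'' case, where the agreement is immediate because both new points are governed by the same explicit rule.
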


\begin{proof}
 Clearly it suffices inductively to prove this for $A\subseteq B,C$
where $B,C$ are elementary good extensions of $A$, so $B=A\cup\{b\},C=A\cup\{c\}, b\in B_i, c\in C_j$ with unique neighbours $b',c'$. 

If $i=j$ and at least one of the extensions is of type 1., then $A\cup \{b,c\}$
with the codistance induced by $B,C$ is in $\K_n$. 
So suppose both extensions are of type 2. In this case $A\cup \{b,c\}$
with the codistance induced by $B,C$ is in $\K_n$ unless $b'=c'$ and $b'$ and $c'$ increase the distance to the same locally maximal tuple. In this case we identify $b$ and $c$ and choose $B$ as the amalgam.

Now suppose $i<j$.
If $B$ is an elementary good extension of $A$ of the first
kind, define $d^*$ on $A\cup \{b,c\}$ by 
\[d_{\bar x;i,j}^*(b,c))
=  d_{\bar x;i,j}^*(b',c)-1.\]
Then $A\cup \{b,c\}$ is an elementary good extension of $C$ of the first
kind and is therefore in $\K_n$ by Lemma~\ref{l:goodext}.

Next suppose that $B$ and $C$ are elementary good extensions of $A$ of the second kind and that $b,c$ were attached to different locally maximal
tuples. Let $(y_1,\ldots, y_{i-1},b',y_{i+1},\ldots y_n)$ be the locally maximal
tuples to which $b$ was attached. If $(x_1,\ldots, x_{i-1},b',x_{i+1},\ldots, x_{j-1},c,x_{j+1},\ldots, x_n)$ is geodesic with 
$(y_1,\ldots, y_{i-1},b',y_{i+1},\ldots y_n)$  define $d^*$ on $A\cup \{b,c\}$ by 
\[d_{\bar x;i,j}^*(b,c)= d_{\bar x;i,j}^*(b',c)+1.
\] 
and otherwise 
\[d_{\bar y;i,j}^*(b,c)
= |d_{\bar y;i,j}^*(b',c)-1|.\]

Then $A\cup \{b,c\}$ is an elementary good extension of $C$ of the second
kind and is therefore in $\K_n$ by Lemma~\ref{l:goodext}.

Finally suppose that $b,c$ are attached to the same locally maximal
tuple $(x_1,\ldots,x_{i-1},b',x_{i+1},\ldots,x_{j-1},c',x_{j+1},\ldots, x_n)$.

If $(y_1,\ldots, y_{i-1},b',y_{i+1},\ldots,y_{j-1},c',y_{j+1},\ldots y_n)$ is geodesic with 
\[(x_1,\ldots,x_{i-1},b',x_{i+1},\ldots,x_{j-1},c',x_{j+1},\ldots, x_n),\] then
\[d_{\bar y;i,j}^*(b,c) =  d_{\bar x;i,j}^*(b,c')+1.\]

and otherwise 
\[d_{\bar y;i,j}^*(b,c)
= |d_{\bar y;i,j}^*(b,c')-1|.\]
Then $A\cup \{b,c\}$ is an elementary good extension of $C$ of the second
kind and is therefore in $\K_n$ by Lemma~\ref{l:goodext}.

\end{proof}

Using a bit of model theory, we may extend the language 
of graphs with $n$-ary predicates $d^*_k,k\in\N$ denoting the codistance and
by binary function symbols $f_i(x,y),i\in\N$ with $f_i(x,y)=z$ if $z$ is the
$i^{th}$ element on the path from $x$ to $y$ if such a path of length at least $i$ exists and $z=x$ otherwise. In this expanded language, the substructure generated by a subset $A$ will include all finite geodesic paths between elements from the same coordinate tree $A_k$. Hence in this language, the class $\K_n$ is closed
under finitely generated substructures. Since by Axiom 1. any $A\in\K_n$ contains the structure consisting only of an $n$-tuple $(x_1,\ldots, x_n)$ with $d^*(\bar x)=0$, the class $\K_n$ has a Fra\"iss\'e limit $M_n$ (see e.g. \cite{TZ}, Ch. 4.4), i.e. a countable structure whose finite substructures are exactly the structures isomorphic to elements of $\K_n$ and whose
automorphism group acts  transitively on isomorphism classes of finite substructures.

Suppose $A$ is an $n$-fold tree and $(x_1,\ldots, x_n)\in A$ is such that $d^*(\bar x)=0$.
and $y,z\sim x_i$ for some $1\leq i\leq n$. Then the set \[\T_{\bar x,y}=\bigcup_{1\leq j\leq n} \{z\in A_j
\colon d_{\bar x;i,j}^*(y,z)>d_{\bar x;i,j}^*(x_i,z) \}\] is called a \emph{half apartment}, and
 the set \[\T_{\bar x;y,z}=\T_{\bar x;y}\cup \T_{\bar x;z}\] is called an \emph{apartment}. 

The automorphism group of a multiple tree $A=\{A_i\}_{i\leq n}$ is said to act \emph{strongly transitively} if it acts transitively on the set of \emph{marked apartments},
i.e. transitively on the set $\{(x_1,\ldots, x_n,y)\colon x_i\in A_i, y\sim x_1\}$. This is equivalent to the automorphism group having a BN-pair (see \cite{AB} Ch. 6).

Hence we obtain our main theorem:

\begin{theorem}\label{t:multipletree}
The Fra\"iss\'e limit $M_n$ is a multiple tree whose automorphism group acts strongly transitively on the set of apartments.
\end{theorem}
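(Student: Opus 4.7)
The plan is to verify directly that $M_n$ carries the structure of a multiple tree over its coordinate trees $(M_n)_k$ and then deduce strong transitivity from the ultrahomogeneity of the Fra\"iss\'e limit. First I would check that each $(M_n)_k$ is an infinite tree without end-vertices: acyclicity is inherited from the fact that every $A_k$ with $A\in\K_n$ is a tree; connectedness follows by embedding any two given vertices together with a joining path into a common finite substructure in $\K_n$; and infinite valency at every vertex follows by iterating elementary good extensions of type~1, each of which embeds in $M_n$ by the Fra\"iss\'e extension property.

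Next I would verify the axioms of a multiple tree on the codistance. The $\pm 1$ change along edges, the bound of \emph{at most} one $+1$ neighbour when $d^*>0$, and the neighbour condition of Remark~\ref{r:neighbour} are exactly conditions 2, 3 and 4 of $\K_n$, inherited directly. The nontrivial step is showing \emph{existence} of a $+1$ neighbour. Given $\bar a\in M_n$ with $d^*(\bar a)=m>0$ and a coordinate $k$, I would fix a finite $A\in\K_n$ with $\bar a\in A\subseteq M_n$ and construct an elementary good extension $A\cup\{y\}\in\K_n$ with $y\sim a_k$ and $d^*_{\bar a;k}(y)=m+1$; the Fra\"iss\'e extension property then places $y$ inside $M_n$. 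If $\bar a$ is locally maximal in $A$, the type~2 extension in coordinate $k$ applies directly. Otherwise, I would iteratively replace $\bar a$ by a neighbour tuple of codistance one larger by moving in some coordinate $j\neq k$, noting that whenever the witnessing $+1$ neighbour happens to lie in coordinate $k$ it is itself the desired $y$, since the $\pm 1$ constraint on $d^*$ along a path in the product graph forces $d^*_{\bar a;k}(y)=m+1$. By finiteness of $A$ this recursion terminates at a locally maximal tuple $\bar c$ with $c_k=a_k$, geodesic with $\bar a$, to which the type~2 extension in coordinate $k$ applies to produce the required $y$.

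Finally, strong transitivity reduces to a uniqueness of isomorphism types. A marked apartment $(x_1,\ldots,x_n,y)$ with $d^*(\bar x)=0$ and $y\sim x_1$ generates, in the expanded language with the path functions $f_i$, simply the set $\{x_1,\ldots,x_n,y\}$, and its codistance is determined: $d^*(\bar x)=0$ and $d^*(y,x_2,\ldots,x_n)=1$ are forced by axiom~2 and nonnegativity of codistance. One verifies directly that this structure lies in $\K_n$. Hence any two marked apartments generate isomorphic substructures of $M_n$, and by ultrahomogeneity of the Fra\"iss\'e limit the obvious isomorphism between them extends to an automorphism of $M_n$. The main obstacle is the existence step for the $+1$ neighbour: one must confirm that the recursion stays within elementary good extensions at each stage and carefully trace the codistance formulas, in particular the geodesic clause of the type~2 extension, so that the resulting $y$ has codistance exactly $m+1$ relative to $\bar a$ rather than the larger value $d^*(\bar c)+1$ inherited from the locally maximal base $\bar c$.
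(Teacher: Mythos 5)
Your proposal is correct and follows the same route as the paper: the multiple-tree axioms are verified from the axioms of $\K_n$ together with the extension property of the Fra\"iss\'e limit, and strong transitivity follows from transitivity on isomorphism types of the finite substructure generated by a marked apartment. The paper dismisses the existence of the codistance-increasing neighbour with ``it is clear from the construction''; your recursion to a locally maximal tuple geodesic with $\bar a$ in the remaining coordinates, followed by a type~2 elementary good extension whose geodesic clause yields $d^*_{\bar a;k}(y)=m+1$, is a correct filling-in of exactly that step.
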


\begin{proof}
It is clear from the construction that if $d_{\bar x;i}^*(x)=k$ then there exists a unique $y\sim x_i$ such that $d_{\bar x;i}^*(y) = k+1$. 

The strong transitivity of $\Aut(M_n)$ is immediate by the properties of the
Fra\"iss\'e limit:  the structure $\{(x_1,\ldots, x_n)\}\cup\{ y\}$ with $y\sim x_1$ is in $\K_n$ and $\Aut(M_n)$ acts transitively on the set of substructures
of $M_n$ isomorphic to it.
\end{proof}

For a half apartment $\T_{\bar x;y}$ the corresponding \emph{root group} $U_{\T,\bar x;y}$ in $\Aut(A)$ is the subgroup  of $\Aut(A)$ fixing  the set $\T_{\bar x;y}\cup \{v\colon v\sim z \mbox{  for some } z\in \T_{\bar x;y}\}$ pointwise.

We say that a multiple tree satisfies the \emph{Moufang condition} if for each root  the corresponding root group acts transitively on the set of apartments containing the given root.

\begin{remark}\label{r:regular}
It is easy to see that if a root group acts transitively 
on the set of apartments containing the given  half-apartment, then it
acts regularly. In other words, the stabilizer of an apartment inside a root group is trivial (see \cite{ronan}, Sec. 4).
\end{remark}

\begin{proposition}\label{p:moufang}
The automorphism group of the multiple tree $M_n$ does not satisfy the Moufang condition. 
\end{proposition}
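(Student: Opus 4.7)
I would prove this by contradiction. Assume the Moufang condition holds. Then by Remark~\ref{r:regular} the root group $U_\T$ acts regularly on the apartments containing the root $\T$, and in particular the stabilizer in $U_\T$ of any apartment $\alpha\supseteq\T$ is trivial. The plan is to construct a nontrivial element of this stabilizer, giving a contradiction.

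Fix a root $\T=\T_{\bar x;y}$, with $d^*(\bar x)=0$ and $y\sim x_1$, and an apartment $\alpha=\T_{\bar x;y,z}$ through $\T$. Set
\[ S=\alpha\cup\{w\in M_n: w\sim t\text{ for some }t\in\T\}. \]
Any automorphism of $M_n$ fixing $S$ pointwise lies in $U_\T$ and stabilizes $\alpha$, so it suffices to exhibit a nontrivial such automorphism.

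The idea is the following. Inside $M_n$ pick a finite $\K_n$-substructure $A_0$ containing $\bar x,y,z$ and a chosen finite portion of $S$. An elementary good extension of type~$1$ at $z$ (Definition~\ref{d:good}) attaches a vertex $v\sim z$ to $A_1$ with $d^*_{\bar x;1}(v)=|d^*_{\bar x;1}(z)-1|=0$, and by Lemma~\ref{l:goodext} the result is in $\K_n$. Amalgamating two isomorphic copies of this extension over $A_0$ (permitted by the amalgamation lemma) yields a $\K_n$-structure containing $A_0$ together with two distinct vertices $v,v'\sim z$, each with $d^*_{\bar x;1}(v)=d^*_{\bar x;1}(v')=0$; by Fra\"{\i}ss\'e universality this amalgam embeds into $M_n$.

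The two new vertices $v,v'$ have identical type over $A_0$: the type-$1$ formula $d^*(v,a_2,\ldots,a_n)=|d^*(z,a_2,\ldots,a_n)-1|$ and its analog for $v'$ give the same value on every tuple in $A_0$. Hence the partial map fixing $A_0$ pointwise and swapping $v\leftrightarrow v'$ is a partial isomorphism of $M_n$. A back-and-forth argument using Fra\"{\i}ss\'e homogeneity, extending through successive finite amalgamations in $\K_n$, then produces an automorphism $g\in\Aut(M_n)$ that fixes the whole set $S$ pointwise and exchanges $v$ with $v'$. This $g$ is nontrivial, lies in $U_\T$, and stabilizes $\alpha$, contradicting regularity.

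The main obstacle is the back-and-forth across the infinite set $S$: at each stage one has to produce an amalgamation in $\K_n$ that extends the current partial isomorphism by one new vertex from $M_n$ while preserving the swap of $v$ and $v'$. This is handled by the amalgamation lemma applied at each finite stage, together with the type-equality of $v,v'$ over every finite portion of $S$, which is preserved throughout because it originates from the symmetric type-$1$ formula used to produce $v,v'$.
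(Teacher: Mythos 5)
Your overall strategy --- assume the Moufang condition, use Remark~\ref{r:regular} to conclude that the stabilizer of an apartment inside a root group is trivial, and then exhibit a nontrivial element of that stabilizer --- is a legitimate route, but it is not the paper's, and as written it has a genuine gap. The paper argues group-theoretically that \emph{all} root groups are trivial: by Fra\"iss\'e homogeneity the stabilizer $H_y$ of $(x_1,\ldots,x_n,y)$ acts highly transitively on $\{z\sim x_i\colon z\neq y\}$ and normalizes $U_{\T,\bar x;y}$, and a nontrivial root group acting regularly on the apartments through the root (Remark~\ref{r:regular}) cannot be normalized by such a highly transitive group. That argument only ever invokes homogeneity over \emph{finite} substructures, which is exactly what the Fra\"iss\'e construction delivers.

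The gap in your version is the back-and-forth over the infinite set $S$. First, the asserted ``type-equality of $v,v'$ over every finite portion of $S$'' does not follow from the type-1 formula: that formula determines $d^*(v,\bar a)$ only for tuples $\bar a$ in the finite structure $A_0$ present when $v$ is attached, whereas $S$ contains the whole infinite apartment and all neighbours of $\T$; the values $d^*(v,\bar a)$ for distant $\bar a$ in $S$ are further choices made in building $M_n$, and showing they coincide for $v$ and $v'$ requires an induction along the apartment using Axioms 2--4 that you have not supplied. Second, and more fundamentally, even granting that $v$ and $v'$ realize the same quantifier-free type over all of $S$, Fra\"iss\'e ultrahomogeneity only extends \emph{finite} partial isomorphisms; to extend $\mathrm{id}_S\cup\{(v,v')\}$ to an automorphism you must, at each stage, choose the image $m'$ of a new element $m$ compatibly with fixing every element of $S$, including the infinitely many not yet enumerated. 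The finite amalgamation lemma gives no such control: once some $m\mapsto m'$ with $m\neq m'$ has been committed to, nothing prevents a later $s\in S$ from satisfying $d^*(\ldots,s,\ldots,m,\ldots)\neq d^*(\ldots,s,\ldots,m',\ldots)$, at which point $s\mapsto s$ is no longer a partial isomorphism. Closing this requires a homogeneity-over-$S$ statement that the construction does not automatically provide; the paper's normalizer argument sidesteps the issue entirely.
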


\begin{proof}
We claim that in fact all root groups are trivial. Note that by the properties of the Fra\"iss\'e limit, if $(x_1,\ldots, x_n)$ in $M_n$ is such that $d^*(x_1,\ldots, x_n)=0$, the stabilizer $H$ of $x_1,\ldots x_n$ acts highly transitively (i.e. $m$-transitively for any $m$) on the set of vertices  $z\sim x_i$ since for any $m\in\N$
the structure $\{x_1,\ldots, x_n, z_1,\ldots z_m\}$ with $z_j\sim x_i, j=1,\ldots, m$ is in $\K_n$ and $d^*$ is uniquely determined. In particular, the stabilizer $H_y$ of $y$ in $H$ acts highly transitively on the set of $z\sim x_i,z\neq y$ and normalizes
$U_{\T,\bar x;y}$. If $U_{\T,\bar x;y}$ was nontrivial, this would contradict Remark~\ref{r:regular}, whence the claim. 

\end{proof}

\vspace{1cm}

\vspace{.5cm}
\noindent\parbox[t]{15em}{
Katrin Tent,\\
Mathematisches Institut,\\
Universit\"at M\"unster,\\
Einsteinstrasse 62,\\
D-48149 M\"unster,\\
Germany,\\
{\tt tent@math.uni-muenster.de}}

\end{document}